\newcounter{oftheorem}[subsection]
\newenvironment{mytheorem}[1]%
{\begin{trivlist}
     
     \refstepcounter{oftheorem}
     \item[\hspace{\labelsep}\bf\thesection.\arabic{oftheorem} #1]}%
{\end{trivlist}}
\newenvironment{definition}{\begin{mytheorem}{Definition}\it}{\end{mytheorem}}
\newenvironment{proposition}{\begin{mytheorem}{Proposition}\it}{\end{mytheorem}}
\newenvironment{theorem}{\begin{mytheorem}{Theorem}\it}{\end{mytheorem}}
\newenvironment{remark}{\begin{mytheorem}{Remark}}{\end{mytheorem}}
\newenvironment{lemma}{\begin{mytheorem}{Lemma}}{\end{mytheorem}}
\author{Stavros Anastassiou\\
Department of Mathematics\\
Aristotle University of Thessaloniki\\
GR-54124 Thessaloniki, Greece\\
sanastassiou@gmail.com}
\title{Singularities of $3$--d vector fields preserving the form of Martinet}
\begin{document}
\maketitle
\begin{abstract}
We study the local structure of vector fields on $\mathbb{R}^3$ which preserve the Martinet $1$-form $\alpha=(1+x)dy\pm zdz$. We present the classification of their singularities, up to diffeomorphisms preserving the form $\alpha$, as well as their transversal unfoldings. We are thus able to provide a fairly complete list of the bifurcations such vector fields undergo.   \end{abstract}
\textbf{Keywords:} Martinet $1$-form, singularities, vector fieds, bifurcations
\\
\textbf{MSC2010:} 37G05, 37C15, 58K45
\section{Introduction}
The study of singularities of vector fields, along with the subsequent analysis of the bifurcations they undergo, is a subject that has attracted a lot of attention. In case the vector fields preserve a special structure (symplectic structure, volume form, symmetry...), it is only natural to study their classification up to transformations (hommeomorphisms/diffeomorphisms) which also preserve this structure. In this article, we engage ourselves with the local classification of vector fields on $\mathbb{R}^3$ which preserve the Martinet $1$-form.

To make things precise, let $M$ be a smooth (i.e. $C^{\infty}$), $3$--dimensional,  manifold and $w$ a differential $1$--form on $M$. At a generic point of $M$, the condition $w\wedge dw\neq 0$ is fulfilled and, locally, $w$ reduces to the Darboux normal form $dz+xdy$.

There may exist, however, a smooth submanifold $M_1$, of codimension $1$ in $M$, at a generic point $p$ of which the $1$--form w satisfies the following conditions:
$$(i)\ w|_p\neq 0,\ (ii)\ (dw)|_p\neq 0,\ (iii)\ ker (dw)|_p \pitchfork M_1.$$
At such a point, Martinet (\cite{Martinet}) showed that the $1$-form can be written (at least locally) as $\alpha =(1+x)dy\pm zdz$. We shall refer to $\alpha$ as ``the Martinet 1--form".

This is a $1$--form on $M=\mathbb{R}^3$, which fulfils the condition $\alpha \wedge da\neq 0$ everywhere, except the $M_1=\{(x,y,z)\in M,z=0\}$ surface. At the point $p=(0,0,0)$ of this surface, the three conditions above are indeed satisfied; that is, $\alpha |_p=dy\neq 0,\ (da)|_p=dx\wedge dy \neq 0$, while $ker (a|_p)=\langle \frac{\partial}{\partial x},\frac{\partial}{\partial z}\rangle$, which is transverse to $M_1$.

In dimension three, the Darboux and the Martinet models are the only stable models for 1--forms (the same holds for their generalizations, in any odd--dimensional manifold \cite{Zhitomirskii}). Vector fields and diffeomorphisms preserving the contact form (and more generally, the contact structure) were studied in \cite{Lychagin1,Lychagin2}. Here, we engage ourselves with the study of vector fields preserving the Martinet form.

To be more precise, let $\mathcal{X}(\mathbb{R}^3,\alpha)$ stand for the set of germs, at the origin, of those $C^{\infty}$ vector fields $X$ of $\mathbb{R}^3$, which preserve the $1$--form $\alpha$, that is, those vector fields which satisfy equation $\mathcal{L}_X\alpha =0$, where $\mathcal{L}_X$ stands for the Lie derivative in the direction of $X$. Let also $Diff(\mathbb{R}^3,\alpha)$ denote the set of germs, at the origin, of $C^{\infty}$ diffeomorphisms  preserving the form $\alpha$, that is, diffeomorphisms $\phi$ of $\mathbb{R}^3$ with $\phi ^*\alpha=\alpha$. In this paper, we wish to classify vector fields of $\mathcal{X}(\mathbb{R}^3,\alpha)$, up to diffeomorphisms belonging in $Diff(\mathbb{R}^3,\alpha)$.

Towards this end, in section $2$ we give the general form of vector fields and diffeomorphisms preserving the Martinet model and show that their study can be simplified, using the fact that they depend on univariate functions. In section $3$, we recall the local classification theorem, for 1--dimensional vector fields, and use it to give the local models of vector fields of interest here. In section 4, we present transversal unfoldings for the singularities found in the previous section and study the simplest bifurcation occurring; in an analogous way, one can study bifurcations of arbitrary, finite, codimension.

Throughout this article, we assume everything to be smooth, that is, we work in the $C^{\infty}$ category. Furthermore, our considerations are purely local. In what follows, even when, for the sake of simplicity, not explicitly stated, we study germs of functions and germs of vector fields at the origin. The neighbourhood of the origin, which interests us, does not include the $x=-1$ line, on which $\alpha$ descents to $\pm zdz$.      

For the interested reader, excellent references on the classification of vector fields should include \cite{Chow-Li-Wang, Zhitomirskii,Banyaga-Llave, Meiss-Dullin}; on singularity theory see \cite{Brocker}; some recent articles in the same spirit are \cite{Kourliouros1,Kourliouros2}, while this article is directly connected with \cite{Anastassiou}.
\section{The vector fields of interest}
Consider $\mathbb{R}^3$ equipped with the Martinet form $\alpha$. If $X\in \mathcal{X}(\mathbb{R}^3,\alpha)$, write $X=X_1\frac{\partial}{\partial x}+X_2\frac{\partial}{\partial y}+X_3\frac{\partial}{\partial z}$. Equation $\mathcal{L}_X\alpha=0$ becomes:
\[
\begin{cases}
(1+x)\frac{\partial X_2}{\partial x}\pm z\frac{\partial X_3}{\partial x}&=0\\
X_1+(1+x)\frac{\partial X_2}{\partial y}\pm z\frac{\partial X_3}{\partial y}&=0\\
(1+x)\frac{\partial X_2}{\partial z}\pm X_3\pm z\frac{\partial X_3}{\partial z}&=0
\end{cases}.
\]
By studying the system above, one finds that $X=-(1+x)X_2(y)\frac{\partial}{\partial x}+X_2(y)\frac{\partial}{\partial y}$. Note that its third component vanishes identically. We may thus restrict our attention to the $z=0$ plane and study $C^{\infty}$ vector fields defined on it, which preserve the $1$--form $\mu=(1+x)dy$ (that is, vector fields $X$ satisfying $\mathcal{L}_X\mu=0$). We shall denote the set of germs at the origin of such vector fields by $\mathcal{X}(\mathbb{R}^2,\mu)$.

We have the following:
\begin{lemma}
\label{lemma-bijection}
There exists a bijection between the space of univariate functions $C^{\infty}(\mathbb{R},\mathbb{R})$ and the space $\mathcal{X}(\mathbb{R}^2,\mu)$. If we denote by $X_f$ the vector field corresponding to function $f$, this bijection is given by:
$$f\mapsto X_f=-(1+x)f'(y)\frac{\partial}{\partial x}+f(y)\frac{\partial}{\partial x},\ X_f\mapsto \frac{1}{1+x}\mu (X_f).$$
\end{lemma}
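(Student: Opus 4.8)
The plan is to work directly in the reduced two–dimensional setting with $\mu=(1+x)\,dy$, to characterize explicitly the solutions of $\mathcal{L}_X\mu=0$, and then to read off the two stated maps as mutual inverses. Writing a general planar germ as $X=A(x,y)\frac{\partial}{\partial x}+B(x,y)\frac{\partial}{\partial y}$, I would compute the Lie derivative through Cartan's formula $\mathcal{L}_X\mu=d(\iota_X\mu)+\iota_X(d\mu)$. Since $\iota_X\mu=(1+x)B$ and $d\mu=dx\wedge dy$, a short computation gives $\mathcal{L}_X\mu=(1+x)\frac{\partial B}{\partial x}\,dx+\big(A+(1+x)\frac{\partial B}{\partial y}\big)\,dy$.

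Next I would solve the system $\mathcal{L}_X\mu=0$ by splitting into components. The $dx$–component forces $\frac{\partial B}{\partial x}=0$, because $1+x$ does not vanish on the neighbourhood of the origin under consideration; hence $B=B(y)$ is a function of $y$ alone. The $dy$–component then yields $A=-(1+x)B'(y)$. This shows that every $X\in\mathcal{X}(\mathbb{R}^2,\mu)$ has exactly the form $X_f$ with $f:=B$, and, conversely, that each such $X_f$ has smooth coefficients and (by the same computation, with $B_x=0$ and $A+(1+x)B_y=0$) preserves $\mu$. Thus the forward map $f\mapsto X_f$ is well defined into $\mathcal{X}(\mathbb{R}^2,\mu)$, and the structural characterization is complete.

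Finally I would verify that the two assignments are mutually inverse. For the composite $f\mapsto X_f\mapsto\frac{1}{1+x}\mu(X_f)$, note that $\mu(X_f)=(1+x)f(y)$, so $\frac{1}{1+x}\mu(X_f)=f$, recovering the original univariate function; in particular this confirms that the backward map does return an element of $C^\infty(\mathbb{R},\mathbb{R})$, that is, a function of $y$ only. For the other composite, starting from $X\in\mathcal{X}(\mathbb{R}^2,\mu)$ the characterization above gives $X=X_B$ with $B=B(y)$, and $\frac{1}{1+x}\mu(X)=B$ regenerates $X_B=X$. Smoothness is preserved in both directions, so the two maps are inverse bijections.

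The computation is elementary throughout; the only point requiring care is the non-vanishing of $1+x$, which is precisely what allows division by $(1+x)$ both in the analysis of the $dx$–component and in the backward map, and which is guaranteed by the standing assumption that the relevant neighbourhood excludes the line $x=-1$. That bookkeeping, rather than any analytic difficulty, is the step I would watch most closely.
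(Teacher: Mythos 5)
Your proposal is correct and follows essentially the same route as the paper, which does not prove the lemma explicitly but derives the form $X=-(1+x)f'(y)\frac{\partial}{\partial x}+f(y)\frac{\partial}{\partial y}$ from the componentwise system $\mathcal{L}_X\alpha=0$ in the three--dimensional setting before restricting to the plane; your direct two--dimensional computation of $\mathcal{L}_X\mu$ via Cartan's formula, together with the check that the two maps are mutually inverse, is just the natural fleshing-out of that argument. The one point you rightly flag --- division by $1+x$ --- is exactly the standing assumption the paper makes, so there is no gap.
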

\begin{remark}
Note that, if $X\in\mathcal{X}(\mathbb{R}^2,\mu)$, by setting $H=\mu(X)$, $X$ can be written in the familiar form $-\frac{\partial H}{\partial y}\frac{\partial}{\partial x}+\frac{\partial H}{\partial x}\frac{\partial}{\partial y}$. That is, vector fields of the plane, preserving the form $\mu$, are a special class of Hamiltonian fields.
\end{remark}
In the same way, we can obtain the form of diffeomorphisms preserving the $1$--form $\mu$, that is, diffeomorphisms $\phi$ such that $\phi^*\mu=\mu$. We denote their set by $Diff(\mathbb{R}^2,\mu)$.
\begin{lemma}
Let $\phi \in Diff(\mathbb{R}^2,\mu)$. Then:
$$\phi(x,y)=(\frac{1+x}{\psi '(y)}-1,\psi(y)),$$
where $\psi:(\mathbb{R},0)\rightarrow (\mathbb{R},0)$ is the germ at the origin of a smooth function with $\psi'(0)=1$. 
\end{lemma}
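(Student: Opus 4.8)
The plan is to impose the condition $\phi^*\mu = \mu$ directly on a general germ of a diffeomorphism and solve the resulting partial differential equations. I would write $\phi(x,y) = (u(x,y), v(x,y))$ for smooth germs $u,v$ at the origin, with $\phi(0,0)=(0,0)$, and compute the pullback. Since $\mu = (1+x)\,dy$, we have $\phi^*\mu = (1+u(x,y))\,dv = (1+u)\bigl(\tfrac{\partial v}{\partial x}\,dx + \tfrac{\partial v}{\partial y}\,dy\bigr)$. Setting this equal to $\mu = (1+x)\,dy$ and matching the coefficients of $dx$ and $dy$ gives the two equations
\begin{equation*}
(1+u)\frac{\partial v}{\partial x} = 0, \qquad (1+u)\frac{\partial v}{\partial y} = 1+x.
\end{equation*}

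Next I would extract the structural consequences. Because we work in a neighbourhood of the origin that excludes the line $x=-1$ (as the excerpt stipulates), the factor $1+u$ is nonvanishing there; indeed $1+u(0,0)=1\neq 0$. Hence the first equation forces $\partial v/\partial x \equiv 0$, so $v$ depends on $y$ alone: write $v(x,y)=\psi(y)$ with $\psi(0)=0$. The second equation then reads $(1+u)\psi'(y) = 1+x$, which I can solve algebraically for $u$ to obtain $u(x,y) = \tfrac{1+x}{\psi'(y)} - 1$, provided $\psi'$ is nonvanishing near $0$. This reproduces exactly the claimed form $\phi(x,y) = \bigl(\tfrac{1+x}{\psi'(y)}-1,\ \psi(y)\bigr)$.

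It then remains to pin down the admissibility conditions on $\psi$ and to verify that the construction is reversible, i.e.\ that every such $\psi$ yields a genuine element of $Diff(\mathbb{R}^2,\mu)$. Smoothness of $u$ near the origin requires $\psi'(0)\neq 0$; computing the Jacobian of $\phi$ and requiring it to be an invertible germ (so that $\phi$ is a local diffeomorphism) gives $\det D\phi = \psi'(y)\cdot\partial u/\partial x = \psi'(y)\cdot\tfrac{1}{\psi'(y)} = 1 \neq 0$ automatically, so the only constraint is $\psi'(0)\neq 0$. Finally, to match the normalization $\psi'(0)=1$ stated in the lemma, I would check that this is exactly the condition forcing $\phi$ to fix the base point correctly and preserve the orientation/scale dictated by $\mu$ at the origin: evaluating $u(0,0)=0$ gives $\tfrac{1}{\psi'(0)}-1 = 0$, hence $\psi'(0)=1$.

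The computation is essentially routine, so I do not expect a serious obstacle; the one point requiring care is the justification that $1+u$ is invertible and that division by $\psi'$ is legitimate as a germ. This is where the standing assumption that our neighbourhood avoids the line $x=-1$ (on which $\mu$ degenerates) is used, together with the requirement $\psi'(0)\neq 0$ that makes $u$ a well-defined smooth germ; I would state these explicitly to make the equivalence between the two equations and the closed form genuinely biconditional.
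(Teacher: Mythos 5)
Your proposal is correct and follows exactly the route the paper indicates (the paper's own proof is a one-line sketch: study $\phi^*\mu=\mu$ under $\phi(0,0)=(0,0)$); you simply carry out the coefficient comparison, the deduction that $v$ is independent of $x$, the algebraic solution for $u$, and the derivation of $\psi'(0)=1$ from $u(0,0)=0$ in full detail. No gaps.
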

\begin{proof}
The form of $\phi$ is obtained by studying equation $\phi ^*\mu=\mu$, under the condition $\phi(0,0)=(0,0)$.
\end{proof}
We aim to classify members of $\mathcal{X}(\mathbb{R}^2,\mu)$, up to diffeomorphisms preserving $\mu$. If $X,Y\in \mathcal{X}(\mathbb{R}^2,\mu)$, we shall call them $\mu$--conjugate if a diffeomorphism $\phi \in Diff(\mathbb{R}^2,\mu)$ exists, such that $\phi _*Y=X$. We note that this equivalence relation induces an equivalence relation on the functions, on which the vector fields depend.
\begin{lemma}\label{vec-func}
Let $\psi:(\mathbb{R},0)\rightarrow (\mathbb{R},0),\ \psi'(0)=1$. The vector field $X\in \mathcal{X}(\mathbb{R}^2,\mu)$, corresponding to function $f$, is $\mu$--conjugate to the vector field $Y$, corresponding to the function $g(y)=\frac{1}{\psi'(y)}f(\psi(y))$.  
\end{lemma}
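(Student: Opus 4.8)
The plan is to produce the required conjugating diffeomorphism explicitly from the previous lemma and then to verify the conjugacy by tracking how the ``reduced Hamiltonian'' of a $\mu$-preserving field transforms under a $\mu$-preserving pushforward. Concretely, let $\phi\in Diff(\mathbb{R}^2,\mu)$ be the diffeomorphism attached to $\psi$, namely $\phi(x,y)=\bigl((1+x)/\psi'(y)-1,\ \psi(y)\bigr)$, and set $X=X_f$, $Y=X_g$. I must show $\phi_*Y=X$. First I would record two structural facts. By the Remark, a field $Z\in\mathcal{X}(\mathbb{R}^2,\mu)$ is completely determined by its reduced Hamiltonian $h(Z)=\tfrac{1}{1+x}\mu(Z)$, and under the bijection of Lemma \ref{lemma-bijection} one has $h(X_f)=f$ and $h(X_g)=g$, where $\mu(X_f)=(1+x)f(y)$. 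Thus it suffices to prove that $\phi_*Y$ again lies in $\mathcal{X}(\mathbb{R}^2,\mu)$ and that its reduced Hamiltonian equals $f$.

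The key step is the transformation law for $\mu$ under $\phi$. Since $\phi^*\mu=\mu$ (equivalently $\phi_*\mu=\mu$), the identity $\mathcal{L}_{\phi_*Y}\mu=\mathcal{L}_{\phi_*Y}(\phi_*\mu)=\phi_*(\mathcal{L}_Y\mu)=0$ shows $\phi_*Y\in\mathcal{X}(\mathbb{R}^2,\mu)$, and the same invariance gives $\mu(\phi_*Y)=\mu(Y)\circ\phi^{-1}$. Hence I would compute $\mu(Y)\circ\phi^{-1}$: writing $\phi^{-1}(x,y)=\bigl((1+x)\psi'(\psi^{-1}(y))-1,\ \psi^{-1}(y)\bigr)$ and using $\mu(Y)=(1+x)g(y)$, a short substitution gives $\mu(\phi_*Y)=(1+x)\,\psi'(\psi^{-1}(y))\,g(\psi^{-1}(y))$. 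Dividing by $1+x$ and inserting $g(y)=f(\psi(y))/\psi'(y)$ collapses the $\psi'$ factors and yields $h(\phi_*Y)(y)=f(y)$, whence $\phi_*Y=X_f=X$ by the bijection.

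The only delicate point is bookkeeping: one must stay consistent about the direction of the map (pushforward of $Y$ versus pullback), so that $\mu(\phi_*Y)=\mu(Y)\circ\phi^{-1}$ is applied with the correct composition, and one must invert $\phi$ carefully on the $x$-coordinate, where the factor $1/\psi'$ becomes $\psi'\circ\psi^{-1}$. The algebraic cancellation that produces exactly the weight $1/\psi'(y)$ in $g$ is forced by this $x$-rescaling, and is precisely what makes the reduced-Hamiltonian transformation law $g(y)=f(\psi(y))/\psi'(y)$ the correct one.

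As a fully elementary consistency check, one may instead compute $\phi_*Y$ directly from the Jacobian $D\phi$ and verify componentwise that it equals $X_f$ at the image point, where $1+u=(1+x)/\psi'(y)$ and $v=\psi(y)$. There the $\partial/\partial y$-component matches immediately from the definition of $g$, since $\psi'(y)g(y)=f(\psi(y))$, while the $\partial/\partial x$-component matches after differentiating $g$ and observing that the two $\psi''$-terms cancel; this route avoids the inverse $\phi^{-1}$ at the cost of a slightly longer calculation.
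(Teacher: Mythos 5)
Your proof is correct, but it takes a genuinely different route from the paper's. The paper simply writes out $Y=X_g$ explicitly (expanding $g'$, which produces the $\psi''$ terms), takes the same $\phi(x,y)=\bigl(\frac{1+x}{\psi'(y)}-1,\psi(y)\bigr)$, and verifies the conjugacy componentwise via $D\phi\cdot Y=X\circ\phi$ --- i.e.\ exactly the ``fully elementary consistency check'' you relegate to your last paragraph. Your main argument is instead structural: you use $\phi^*\mu=\mu$ together with naturality of the Lie derivative to conclude $\phi_*Y\in\mathcal{X}(\mathbb{R}^2,\mu)$, then compute the reduced Hamiltonian $\frac{1}{1+x}\mu(\phi_*Y)=\bigl(\mu(Y)\circ\phi^{-1}\bigr)/(1+x)=f$ and invoke the bijection of Lemma \ref{lemma-bijection} to identify $\phi_*Y$ with $X_f$. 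The computations check out (in particular your formula for $\phi^{-1}$ and the cancellation of the $\psi'$ factors are right). What your approach buys is conceptual clarity: it avoids the $\psi''$ bookkeeping entirely and explains \emph{why} the weight $1/\psi'$ in $g$ is forced, namely as the transformation law of the reduced Hamiltonian under the $x$-rescaling built into $\phi$. What the paper's direct verification buys is self-containedness --- it does not lean on the injectivity half of Lemma \ref{lemma-bijection} nor on the identity $\mathcal{L}_{\phi_*Y}(\phi_*\mu)=\phi_*(\mathcal{L}_Y\mu)$. One minor point: you should note explicitly that the given $\phi$ does satisfy $\phi^*\mu=\mu$ (a one-line check, since $\frac{1+x}{\psi'(y)}\,d(\psi(y))=(1+x)\,dy$); the preceding lemma as stated only gives the necessary form of elements of $Diff(\mathbb{R}^2,\mu)$, not the converse.
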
 
\begin{proof}
It is $X=-(1+x)f'(y)\frac{\partial}{\partial x}+f(y)\frac{\partial}{\partial y}$ and 
$$Y=\bigg(\frac{(1+x)}{(\psi'(y))^2}\psi''(y)f(\psi(y))-(1+x)f'(\psi(y))\psi'(y)\frac{\partial}{\partial x}\bigg)+\frac{1}{\psi'(y)}f(\psi(y))\frac{\partial}{\partial y}.$$
Define $\phi(x,y)=(\frac{1+x}{\psi'(y)}-1,\psi(y))$. It is easy to confirm that $\phi \in Diff(\mathbb{R}^2,\mu)$ and $D\phi \cdot Y=X\circ \phi$.
\end{proof} 
Thus, to classify vector fields of the plane that preserve the form $\mu$, we shall concentrate on their dependence on univariate functions.
\section{Local normal forms for members of $\mathcal{X}(\mathbb{R}^2,\mu)$}
According to \ref{vec-func}, functions, corresponding to $\mu$--conjugate vector fields, are related through a diffeomorphism tangent to the identity.

To make this precise, let $f,g$ be two function--germs, at the origin of $\mathbb{R}$. Relation $g=\frac{1}{\psi'}f(\psi)$, used in \ref{vec-func}, is nothing more than the smooth conjugacy relation for vector fields, in the $1$--dimensional case, where the conjugacy $\psi$ satisfies conditions $\psi(0)=0,\ \psi'(0)=1$. 

In the $C^{\infty}$ case, the classification of $1$--dimensional vector fields is well--known, see, for example, \cite{Belitskii1}. Of interest to us is the following result:
\begin{theorem}\label{Theo-capeiro-normal-forms}
Let $f(y)\frac{\partial}{\partial y}$ the germ at the origin of a smooth vector field of the line. This field is, locally, smoothly conjugate either with the vector field $\lambda y\frac{\partial}{\partial y}$, in case $f'(0)=\lambda \neq 0$, or with the vector field $(\pm y^k+dy^{2k-1})\frac{\partial}{\partial y},\ d\in \mathbb{R},k\geq 2$.  
\end{theorem}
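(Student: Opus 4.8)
The plan is to follow the cited $1$--dimensional theory, exploiting the conjugacy relation $g=\frac{1}{\psi'}\,f\circ\psi$ recorded above, and to split according to whether the zero of $f$ at the origin is hyperbolic ($\lambda\neq 0$) or degenerate ($\lambda=0$). Note first that under any smooth conjugacy the linear part is preserved: differentiating $g\cdot\psi'=f\circ\psi$ at $0$ gives $g'(0)=f'(0)$, so $\lambda$ is a genuine invariant.

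For the hyperbolic case I would linearize explicitly. Write $f(y)=\lambda y(1+h(y))$ with $h$ smooth and $h(0)=0$; then $s\mapsto \frac{1}{f(s)}-\frac{1}{\lambda s}$ extends smoothly across $0$, so $G(y):=\int_0^y\big(\frac{1}{f(s)}-\frac{1}{\lambda s}\big)\,ds$ is a smooth germ with $G(0)=0$. Integrating the conjugacy condition $\frac{\psi'(w)}{f(\psi(w))}=\frac{1}{\lambda w}$ and substituting $\int\frac{dy}{f(y)}=\frac{1}{\lambda}\ln|y|+G(y)$ turns it into $\frac{1}{\lambda}\ln|\psi(w)|+G(\psi(w))=\frac{1}{\lambda}\ln|w|$, i.e. $w=\psi(w)\,e^{\lambda G(\psi(w))}$. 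Setting $\Phi(u)=u\,e^{\lambda G(u)}$ we have $\Phi(0)=0$ and $\Phi'(0)=1$, so by the implicit function theorem $\psi=\Phi^{-1}$ is a smooth germ of a diffeomorphism conjugating $f\partial_y$ to $\lambda y\,\partial_y$. This is the $1$--dimensional incarnation of Poincaré--Sternberg linearization, elementary here because there are no resonances to obstruct it.

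For the degenerate case let $k\geq 2$ be the order of the zero, so $f(y)=a_ky^k+O(y^{k+1})$ with $a_k\neq 0$; a real scaling $y\mapsto cy$ normalizes $a_k$ to $\pm 1$ (its sign being an invariant). I would then produce the formal normal form by killing the higher coefficients successively: inserting $\psi(w)=w+b_mw^m$ ($m\geq 2$) into $g=f\circ\psi/\psi'$ changes the coefficient of $w^{\,k+m-1}$ by $\pm(k-m)b_m$ and disturbs no lower order, so processing orders from low to high every unwanted coefficient can be removed except the one at $m=k$, that is, the coefficient of $w^{2k-1}$. This isolates $\pm y^k+d\,y^{2k-1}$ as the formal normal form. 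That $d$ is a true (not merely formal) invariant follows from the residue: the substitution $y=\psi(w)$ gives $\int\frac{dw}{g(w)}=\int\frac{dy}{f(y)}$, so the coefficient of $\frac{1}{y}$ in the Laurent expansion of $\frac{1}{f}$ is preserved, and for the normal form this coefficient equals $-d$.

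It remains to upgrade the formal statement to a genuine smooth conjugacy, and this is the step I expect to be the main obstacle. By Borel's theorem the formal transformation is realized by a smooth germ reducing $f\partial_y$ to $N\partial_y$, where $N(y)=\pm y^k+d\,y^{2k-1}$, modulo a remainder $r$ that is flat at the origin. To absorb $r$ I would work separately on $\{y>0\}$ and $\{y<0\}$, where $N+r$ is nonvanishing, and use the time function: a one--sided conjugacy to the model is $\psi=S^{-1}\circ S_0$ with $S(y)=\int^{y}\frac{ds}{N(s)+r(s)}$ and $S_0(w)=\int^{w}\frac{ds}{N(s)}$. The key point is that $\frac{1}{N+r}-\frac{1}{N}=\frac{-r}{N(N+r)}$ is again flat (a flat function divided by one with a zero of order $2k$), so $S$ and $S_0$ have identical singular parts and differ by a smooth germ; in particular their residues coincide, which is exactly what lets the two one--sided conjugacies glue into a single germ that is $C^\infty$ across $y=0$. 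Verifying this smoothness at the singular point is the delicate part, and the complete argument is the one carried out in \cite{Belitskii1}.
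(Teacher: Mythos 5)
Your proposal is correct and lands on the same normal forms, but it organizes the degenerate case genuinely differently from the paper. The paper follows Belitskii's three-step decomposition: first conjugate $(\pm y^k+dy^{2k-1})\frac{\partial}{\partial y}$ to $(\pm y^k+dy^{2k-1}+y^{2k}g(y))\frac{\partial}{\partial y}$, i.e.\ remove the entire tail of order $\geq 2k$ in one stroke; then reduce a general higher-order perturbation $ay^k+h(y)$ to $ay^k+dy^{2k-1}$; finally rescale the leading coefficient. You instead rescale first and then run the standard ``formal plus flat'' scheme: kill non-resonant coefficients order by order (your computation that $\psi(w)=w+b_mw^m$ shifts the coefficient of $w^{k+m-1}$ by $\pm(k-m)b_m$ is correct and cleanly isolates the single resonance at $m=k$), realize the formal change via Borel's theorem, and absorb the remaining flat error with the time-function construction on each half-line. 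Both routes defer the same delicate point --- smoothness of the glued conjugacy across the degenerate zero --- to \cite{Belitskii1}, so neither is more complete than the other; what your version buys is an explicit identification of the obstruction and, via the residue of $1/f$, a proof that $d$ is a genuine smooth invariant, which the paper asserts but does not justify. Your hyperbolic case is likewise a fully explicit version of the paper's one-line ``solve the conjugacy equation'': the implicit-function argument with $\Phi(u)=u\,e^{\lambda G(u)}$ is correct. One small caveat: your parenthetical claim that the sign of the leading coefficient $a_k$ is an invariant holds only for $k$ odd (where $y\mapsto cy$ multiplies $a_k$ by $c^{k-1}>0$); for $k$ even the reflection $y\mapsto -y$ reverses it, which is precisely why the model carries the symbol $\pm$. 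This does not affect the validity of the statement being proved.
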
  
\begin{proof}
We shall state the main points of the proof of this theorem here, and refer the reader to \cite{Belitskii1} for details.

For the $C^{\infty}$ linearization, one just solves the conjugacy equation with respect to the unknown conjugacy $\psi$.

For the non--linear normal form, the proof consists of three steps: first, one shows that vector fields:
\[
(\pm y^k+dy^{2k-1})\frac{\partial}{\partial y}\hspace*{0.5cm} \text{and}\hspace*{0.5cm} (\pm y^k+dy^{2k-1}+x^{2k}g(y))\frac{\partial}{\partial y},
\]
where $g(y)$ is a $C^{\infty}$ local function, are conjugate. Then, one shows that the vector field $(ay^k+h(y))\frac{\partial}{\partial y}$, where $a\neq 0,h(0)=h'(0)=...=h^{r}(0)=0,k\geq 2$, is conjugate with $ay^k+dy^{2k-1}$. Third, a linear transformation is used to turn the coefficient of $y^k$ to $\pm 1$.  
\end{proof}
\begin{remark}
The sign of the term $y^k$, in the normal form $\pm y^k+dy^{2k-1}$ depends on $k$: if $k$ is odd, the sign is equal to "+", otherwise the sign is equal to the sign of $a$. In our studies, the conjugating diffeomorphism $\psi$ should satisfy $\psi'(0)=1$. The third step in the proof above is therefore not applicable. Thus, we shall use the normal form $ay^k+dy^{2k-1},a\neq 0$.
\end{remark}

Using \ref{Theo-capeiro-normal-forms}, one gets the following:
\begin{theorem}
Let $X\in \mathcal{X}(\mathbb{R}^2,\mu)$.
\begin{enumerate}
\item[(i)] {$X$ does not possess hyperbolic singularities.}
\item[(ii)] {If $X(0)\neq 0$, then $X$ is $\mu$--conjugate to either $X_0=a\frac{\partial}{\partial y}$, or to $X_1=-a(1+x)\frac{\partial}{\partial x}+ay\frac{\partial }{\partial y}$.}
\item[(iii)] {If $j^iX(0)=0,\ i=0,..,k-2$, and $j^{k-1}X(0)\neq 0,\ k\geq 2$, $X$ is $\mu$-conjugate to
\[X_k=-(1+x)(a ky^{k-1}+d(2k-1)y^{2k-2})\frac{\partial}{\partial x}+(a y^k+dy^{2k-1})\frac{\partial}{\partial y}.
\]}
\end{enumerate}
\end{theorem}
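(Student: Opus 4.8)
The plan is to reduce everything to the one-dimensional classification already in hand (Theorem \ref{Theo-capeiro-normal-forms}) via the bijection of Lemma \ref{lemma-bijection} and the conjugacy-of-functions statement of Lemma \ref{vec-func}. The key observation is that the map $f\mapsto X_f$ translates $\mu$-conjugacy of plane vector fields into the $1$-dimensional smooth conjugacy relation $g=\frac{1}{\psi'}f(\psi)$ on the generating univariate functions, subject to the constraint $\psi'(0)=1$. So each part of the theorem should follow by applying the corresponding case of the $1$-dimensional normal-form theorem to $f$, and then reading off the resulting $X_g$ through the explicit formula $X_g=-(1+x)g'(y)\frac{\partial}{\partial x}+g(y)\frac{\partial}{\partial y}$.

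For part (i), I would note that a singularity of $X_f$ at a point with $x\neq -1$ forces $f(y)=0$ there, and hyperbolicity would require the linearization $DX_f$ to have eigenvalues with nonzero real part. Computing $DX_f$ at a zero $(x_0,y_0)$ of $X_f$, the structure of the field (with first component a multiple of the second's $y$-derivative) should make the Jacobian have a repeated or degenerate eigenvalue configuration, so no hyperbolic (saddle/node with two nonzero real-part eigenvalues) singularity can occur; this is essentially the Hamiltonian remark preceding the theorem, since divergence-free planar fields cannot have hyperbolic sinks or sources, and the trace here vanishes. For part (ii), when $X_f(0)\neq 0$ we have $f(0)\neq 0$, so $f'(0)=\lambda$ is either nonzero or zero. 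If $f'(0)=\lambda\neq 0$, Theorem \ref{Theo-capeiro-normal-forms} conjugates $f$ to the linear $\lambda y$, and substituting $g(y)=\lambda y$ (after a rescaling to normalize the leading constant to $a$) into the formula for $X_g$ yields $X_1=-a(1+x)\frac{\partial}{\partial x}+ay\frac{\partial}{\partial y}$; if instead $f'(0)=0$ but $f(0)\neq 0$, then $f$ is a nonvanishing germ conjugate to a constant $a$, giving $X_0=a\frac{\partial}{\partial y}$. For part (iii), the hypothesis $j^iX(0)=0$ for $i=0,\dots,k-2$ and $j^{k-1}X(0)\neq 0$ translates into $f$ vanishing to order exactly $k$ at the origin, so the non-linear case of Theorem \ref{Theo-capeiro-normal-forms} (as refined by the Remark, which forbids the final rescaling step because we require $\psi'(0)=1$) puts $f$ into the form $ay^k+dy^{2k-1}$ with $a\neq 0$; substituting this $g$ into the formula for $X_g$ gives precisely $X_k$.

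The main subtlety I expect to wrestle with is the bookkeeping in translating the order-of-vanishing hypothesis on the \emph{vector field} $X$ (stated via its jets $j^iX(0)$) into the correct order-of-vanishing statement on the \emph{function} $f$, and making sure the constant $a$ and the constraint $\psi'(0)=1$ are handled consistently with the Remark. Because $X_f$ has two components, one of which involves $f'$ and the other $f$, I need to check carefully that $j^{k-1}X(0)\neq 0$ corresponds to $f$ having a nonzero $k$-th Taylor coefficient (and not, say, its derivative controlling the order). Once that correspondence is pinned down, the normal form follows by direct substitution, and verifying that the conjugating diffeomorphism $\phi(x,y)=(\frac{1+x}{\psi'(y)}-1,\psi(y))$ lies in $Diff(\mathbb{R}^2,\mu)$ and realizes the conjugacy is exactly the content of Lemma \ref{vec-func}, so no new geometric work is needed there. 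The statement that no rescaling of the leading coefficient is allowed is the one genuinely delicate point, since it is what prevents us from normalizing $a$ to $\pm 1$ and forces us to keep the free parameter $a\neq 0$ in the final models.
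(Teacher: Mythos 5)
Your overall strategy --- reducing everything to the one--dimensional classification through the correspondence $f\mapsto X_f$ and Lemma \ref{vec-func} --- is exactly the paper's, but your case analysis in part (ii) contains a genuine error. From $X_f(0)=(-f'(0),f(0))$, the hypothesis $X_f(0)\neq 0$ does \emph{not} give $f(0)\neq 0$: it gives $f(0)\neq 0$ \emph{or} $f'(0)\neq 0$ (take $f(y)=y$). The dichotomy must therefore be taken on $f(0)$, not on $f'(0)$. If $f(0)=a\neq 0$, then $f$ is conjugate to the constant $a$ (via $\psi(y)=y+\psi_1(y)$ with $\psi_1'=a/f-1$), yielding $X_0$ --- and this holds whether or not $f'(0)$ vanishes. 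If $f(0)=0$ and $f'(0)=a\neq 0$, the linearization part of Theorem \ref{Theo-capeiro-normal-forms} gives $f\sim ay$, yielding $X_1$. Your version instead conjugates an $f$ with $f(0)\neq 0$ and $f'(0)\neq 0$ to $\lambda y$; this is impossible, because the conjugacy $g=\frac{1}{\psi'}f(\psi)$ with $\psi(0)=0$, $\psi'(0)=1$ satisfies $g(0)=f(0)$, so a germ nonvanishing at the origin can never be equivalent to one vanishing there. (Relatedly, the parenthetical ``after a rescaling to normalize the leading constant to $a$'' contradicts the constraint $\psi'(0)=1$ that you yourself single out as the delicate point; no rescaling occurs, and $a$ is simply $f'(0)$, resp.\ $f(0)$.)

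In part (i), the appeal to the vanishing trace / divergence--free property is not sufficient: a planar Hamiltonian field can perfectly well have a hyperbolic \emph{saddle}. The argument that actually closes the case (and the one the paper uses) is that a singularity at the origin forces both components to vanish, and since the first component is $-(1+x)f'(y)$ with $1+x\neq 0$ near the origin, this forces $f'(0)=0$ in addition to $f(0)=0$; the Jacobian $\left[\begin{smallmatrix}-f'(0) & -f''(0)\\ 0 & f'(0)\end{smallmatrix}\right]$ then has both eigenvalues equal to zero, rather than merely summing to zero. Part (iii) is fine in outline: the jet bookkeeping you flag does work out ($j^iX(0)=0$ for $i\le k-2$ together with $j^{k-1}X(0)\neq 0$ is equivalent to $f$ vanishing to order exactly $k$, precisely because the first component carries $f'$), and the rest is Lemma \ref{vec-func} combined with the Remark, as you say.
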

\begin{proof}
Vector field $X$ is of the form $X=-(1+x)f'(y)\frac{\partial}{\partial x}+f(y)\frac{\partial }{\partial y}.$
\begin{itemize}
\item[(i)] {Let $X(0,0)=(-f'(0),f(0))=(0,0)$. The jacobian matrix of $X$ at the origin is:
\[
\begin{bmatrix}
-f'(0) & -f''(0)\\
0 & f'(0)
\end{bmatrix}.
\]
Thus, its eigenvalues are both equal to $0$.}
\item[(ii)] {If $X(0,0)=(-f'(0),f(0))\neq (0,0)$, we distingulish two cases: either $f(0)\neq 0$ and $f'(0)=c\in \mathbb{R}$, or $f(0)=0,\ f'(0)\neq 0$.

In case $f(0)=a\neq 0$, let us consider the conjugacy equation:
\[
f(y)=\frac{1}{\psi '(y)}a\Rightarrow \psi'(y)=\frac{a}{f(y)}.
\]
Searching for a solution of the form $\psi(y)=y+\psi_1(y),\psi_1(0)=\psi_1'(0)=0$, we arrive at equation:
\[
\psi_1'(y)=\frac{a}{f(y)}-1,
\]
which has a $C^{\infty}$ right--hand side, thus possesses a smooth solution. Therefore, $f$ is smoothly conjugate, via a diffeomorphism of the form $\psi(y)=y+\psi_1(y)$, to the constant function $a$, while function $a$ is conjugate to function $b$, with $\psi$ of the desired form, if, and only if, $a=b$.
   
Thus, $f$ is conjugate to $a$, which corresponds to vector field $X_0$.

On the other hand, if $f(0)=0$ and $f'(0)=a\neq 0$, $f$ is conjugate to $ay$ (due to \ref{Theo-capeiro-normal-forms}), corresponding to the vector field $X_1$.}
\item[(iii)] {Let $j^iX(0)=0,\ i=0,..,k-2$ and $j^{k-1}X(0)\neq 0$. This means that $j^kf\neq 0$, thus $f$ is conjugate with the function $a y^k+dy^{2k-1}$, corresponding to the model $X_k$.}
\end{itemize}
\end{proof}
We summarize these results in Table $2$. Note that there are two kinds of regular points and no hyperbolic singularities.
\begin{center}
Table 2\\
Local models for members of $\mathcal{X}(\mathbb{R}^2,\mu)$
\end{center}    
{\small \begin{center}
   \begin{tabular}{lll}
   \hline
 type of singularity & local model \\
\hline 
regular point & $X_0=a\frac{\partial}{\partial y}$ \\ 
regular point & $X_1=-a(x+1)\frac{\partial}{\partial x}+ay\frac{\partial }{\partial y}$\\
hyperbolic singularity & $-$ \\
degenerate singularity & $X_k=-(1+x)(a ky^{k-1}+d(2k-1)y^{2k-2})\frac{\partial}{\partial x}+$\\
$\ $                  &$+(a y^k+dy^{2k-1})\frac{\partial}{\partial y}$

       \end{tabular}
        \end{center}}
\section{Bifurcations of members of $\mathcal{X}(\mathbb{R}^2,\mu)$}
To study bifurcations of vector fields of interest, we first recall, in our setting, the definition of a versal deformation.
\begin{definition}
Let $f$ be a univariate function (which corresponds to the vector field $f(y)\frac{\partial}{\partial y}$). A deformation of $f$ is a mapping $F:U\times \mathbb{R}\rightarrow \mathbb{R}$, where $U\subset \mathbb{R}^m$ an open neighbourhood of the origin, such that $F(0,y)=f(y)$. This deformation is said to be versal, if for every other deformation $G:V\times \mathbb{R}\rightarrow \mathbb{R}$, where $V\subset \mathbb{R}^l$ an open neighbourhood of the origin, there exist germs $\psi:V\times \mathbb{R}\rightarrow \mathbb{R}$ and $H:V\rightarrow U$, such that:
\begin{itemize}
\item[i)] {$\forall \lambda_0 \in V$, $\psi(\lambda_0,z)$ is the germ at the origin of a diffeomorphism of the line, such that $\psi(\lambda_0,0)=0,\psi'(\lambda_0,0)=1$}
\item[ii)] {$G(\lambda,z)=(\frac{\partial \psi(\lambda,z)}{\partial z})^{-1}F(H(\lambda),\psi(\lambda,z))$.}
\end{itemize}   
\end{definition}
Versal deformations of vector fields of the line have been studied in \cite{Kostov} (see also \cite{Klimes-Rousseau}). Here we are interested in the following result:
\begin{theorem}\label{theo-Kostov}
A versal unfolding of the vector field $\Big(a y^k+\lambda_k y^{2k-1}\Big)\frac{\partial}{\partial y}$ is given by $F:\mathbb{R}^k\times \mathbb{R}\rightarrow \mathbb{R}$, defined as:
\[\ F(\lambda_1,..,\lambda_k,y)=\Big(a y^k+\sum_{i=1}^{k-1}\lambda_iy^{k-1-i}+\lambda_ky^{2k-1}\Big)\frac{\partial}{\partial y}.
\]
\end{theorem}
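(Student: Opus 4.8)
The plan is to deduce the statement from the \emph{infinitesimal versality} criterion. For germs of vector fields of the line this criterion asserts that a deformation is versal exactly when its initial velocities, together with the tangent space to the conjugacy orbit of the central germ, span the whole space of deformation directions (germs of functions at the origin). The equivalence between this transversality condition and genuine versality is the general content of the theory developed in \cite{Kostov}; hence the work to be carried out here is the transversality computation for the central germ $f(y)=ay^k+\lambda_k y^{2k-1}$, after which the result follows by invoking that theory.

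First I would compute the tangent space to the orbit of $f$. By \ref{vec-func} the conjugacy action on univariate functions is $f\mapsto \frac{1}{\psi'}\,f\circ\psi$. Writing $\psi=\mathrm{id}+\varepsilon v+O(\varepsilon^2)$ and differentiating at $\varepsilon=0$ gives the linearised action
\[
v\longmapsto L(v):=vf'-v'f=\big[\,v\tfrac{\partial}{\partial y},\,f\tfrac{\partial}{\partial y}\,\big],
\]
so the orbit tangent space $T_f\mathcal{O}$ is the image of the linear operator $L$. This identifies the infinitesimal problem with a question about the cokernel of a single (Lie-bracket) operator.

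Next I would evaluate $L$ on monomials. With $f=ay^k+\lambda_k y^{2k-1}$ one gets
\[
L(y^j)=a(k-j)\,y^{k+j-1}+\lambda_k(2k-1-j)\,y^{2k+j-2}.
\]
Reading off the lowest-order term, the leading monomial of $L(y^j)$ has degree $k+j-1$ with nonzero coefficient for every $j\neq k$, while for $j=k$ the leading coefficient $a(k-k)$ vanishes and the lowest term jumps up to degree $3k-2$. Passing to the associated graded with respect to the degree filtration, a triangular (Nakayama-type) elimination then shows that $T_f\mathcal{O}$ realises every leading degree $\ge k-1$ with the single exception of $2k-1$. Consequently a complement to $T_f\mathcal{O}$ in the space of germs is spanned by the $k$ monomials $1,y,\dots,y^{k-2}$ and $y^{2k-1}$. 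These are exactly the initial velocities $\frac{\partial F}{\partial\lambda_i}\big|_{\lambda=0}=y^{k-1-i}$ for $i=1,\dots,k-1$ and $\frac{\partial F}{\partial\lambda_k}\big|_{\lambda=0}=y^{2k-1}$ of the proposed unfolding, so $F$ is transversal to the orbit, i.e. infinitesimally versal, and the finiteness of this complement confirms the finite determinacy needed for the machinery.

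The step I expect to be the main obstacle is the implication \emph{infinitesimal versality $\Rightarrow$ versality} in the $C^\infty$ category: unlike the transversality count it is not purely algebraic, since one must integrate a homotopy in parameter space and absorb the higher-order remainder, which requires a Malgrange/Mather-type division (preparation) argument and care with flat functions; I would defer to \cite{Kostov} for precisely this part. A secondary point needing attention is the role of $\lambda_k$ as a modulus of the central singularity: one must verify that the $y^{2k-1}$ direction stays outside $T_f\mathcal{O}$ for \emph{every} value of $\lambda_k$. The computation above secures this, because $y^{2k-1}$ can arise in $\operatorname{im}L$ only as the subordinate term of $L(y)$, coupled to its leading term $y^{k}$, and so cannot be detached; the deformation therefore remains transversal along the whole modulus line rather than at an isolated parameter value.
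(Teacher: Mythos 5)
Your route differs from the paper's only in that the paper offers no proof at all --- Theorem \ref{theo-Kostov} is simply attributed to \cite{Kostov} and \cite{Klimes-Rousseau} --- whereas you carry out the infinitesimal transversality computation explicitly and defer only the implication ``infinitesimally versal $\Rightarrow$ versal'' to those same references. That division of labour is reasonable, and your formulas are correct: the linearised action is indeed $L(v)=vf'-v'f$ and $L(y^j)=a(k-j)y^{k+j-1}+\lambda_k(2k-1-j)y^{2k+j-2}$, so that, over the full group of germs of diffeomorphisms of the line, the realised leading degrees are $\{k-1,k,\dots\}\setminus\{2k-1\}$ and a complement of the orbit tangent space is spanned by $1,y,\dots,y^{k-2},y^{2k-1}$, matching the initial velocities of $F$.

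There is, however, one concrete mismatch you must address. Your count needs $j=0$ and $j=1$, i.e.\ infinitesimal translations and rescalings; but the notion of equivalence in force in this paper excludes both. The conjugacies arising from $Diff(\mathbb{R}^2,\mu)$ satisfy $\psi(0)=0$ and $\psi'(0)=1$, and the definition of versality in Section~4 imposes $\psi(\lambda_0,0)=0$, $\psi'(\lambda_0,0)=1$ for \emph{every} parameter value $\lambda_0$, so the admissible generators $v$ lie in $\mathfrak{m}^2$ and the orbit tangent space is only $L(\mathfrak{m}^2)$, whose leading degrees are $\{k+1,\dots\}\setminus\{2k-1\}$. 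The monomials $y^{k-1}$ and $y^{k}$ then also escape the orbit (consistently with the remark following Theorem \ref{Theo-capeiro-normal-forms} that the coefficient $a$ cannot be normalised under conjugacies tangent to the identity), the complement has dimension $k+2$, and the $k$-parameter family $F$ is not infinitesimally versal for that restricted equivalence: already for $k=2$ the deformation $ay^2+\lambda_2y^3+\varepsilon y$ cannot be induced from $F$ by origin-fixing conjugacies, since for $\varepsilon\neq 0$ it has a hyperbolic zero at the origin while every member of $F$ vanishing at the origin has a degenerate zero there. Kostov's theorem holds because the conjugating family $\psi(\lambda,\cdot)$ is permitted to move the base point and to rescale for $\lambda\neq 0$; your proof implicitly uses that larger group, so you should either state explicitly that versality is being proved for this weaker equivalence, or justify why translations and rescalings may be adjoined in the parametric setting.
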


For a proof of the theorem above, we refer the reader to \cite{Kostov,Klimes-Rousseau}. This theorem, along with the following proposition, will allow us to describe bifurcations for members of $\mathcal{X}(\mathbb{R}^2,\mu)$.
\begin{theorem}\label{theo-unfoldings}
The unfolding:
\begin{equation}
\begin{split}
F(\lambda,x,y)=\ &-(1+x)\Big[a ky^{k-1}+\sum_{i=1}^{k-1}\Big((k-1-i)\lambda_iy^{k-2-i}\Big)+\\
&+(2k-1)\lambda_{k}y^{2k-2}\Big]\frac{\partial}{\partial x}+ \\
\ &+(a y^k+\sum_{i=1}^{k-1}\lambda_iy^{k-1-i}+\lambda_k y^{2k-1})\frac{\partial}{\partial y},
\end{split}
\end{equation}
is a versal unfolding for the vector field:
\[
X_k=-(1+x)(a ky^{k-1}+\lambda_k(2k-1)y^{2k-2})\frac{\partial}{\partial x}+(a y^k+\lambda_ky^{2k-1})\frac{\partial}{\partial y}.
\]
\end{theorem}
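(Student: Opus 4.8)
The plan is to reduce this two-dimensional statement to the one-dimensional versality result of Kostov (Theorem \ref{theo-Kostov}) by exploiting the bijection of Lemma \ref{lemma-bijection} together with its compatibility with conjugacy established in Lemma \ref{vec-func}. The first observation, checked by a direct differentiation, is that the proposed unfolding is nothing but the image under $f\mapsto X_f$ of Kostov's family. Writing $f_\lambda(y)=a y^k+\sum_{i=1}^{k-1}\lambda_i y^{k-1-i}+\lambda_k y^{2k-1}$, one differentiates to obtain $f_\lambda'(y)=a ky^{k-1}+\sum_{i=1}^{k-1}(k-1-i)\lambda_i y^{k-2-i}+(2k-1)\lambda_k y^{2k-2}$, and substituting into $X_{f_\lambda}=-(1+x)f_\lambda'(y)\frac{\partial}{\partial x}+f_\lambda(y)\frac{\partial}{\partial y}$ reproduces exactly $F(\lambda,x,y)$. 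Setting the genuine unfolding parameters $\lambda_1=\cdots=\lambda_{k-1}=0$ recovers $X_k$, so $F$ is indeed a deformation of $X_k$ inside $\mathcal{X}(\mathbb{R}^2,\mu)$.

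Second, I would make precise that versality in the present ($\mu$-preserving, planar) setting is governed entirely by the univariate data. By Lemma \ref{lemma-bijection}, any deformation of $X_k$ through members of $\mathcal{X}(\mathbb{R}^2,\mu)$ is of the form $X_{g_\lambda}$ for a deformation $g_\lambda$ of the univariate function associated with $X_k$; and by Lemma \ref{vec-func} two such fields are $\mu$-conjugate exactly when the associated functions are related by a germ $\psi$ with $\psi(0)=0,\ \psi'(0)=1$. This is precisely the equivalence $g=\frac{1}{\psi'}f(\psi)$ that appears in condition (ii) of the definition of versal deformation preceding Theorem \ref{theo-Kostov}. Hence the correspondence $X\leftrightarrow f$ transports deformations of $X_k$ to deformations of $a y^k+\lambda_k y^{2k-1}$ and $\mu$-equivalences to tangent-to-identity conjugacies, thereby identifying the two versality problems.

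Third, I would supply the parametrized version of Lemma \ref{vec-func} needed to close the loop. Given the data $(H,\psi)$ that Theorem \ref{theo-Kostov} furnishes to reduce an arbitrary univariate deformation $G(\lambda,y)$ to $(\partial_y\psi(\lambda,y))^{-1}F(H(\lambda),\psi(\lambda,y))$, I would define, for each parameter value, the planar map $\phi_\lambda(x,y)=\big(\frac{1+x}{\partial_y\psi(\lambda,y)}-1,\ \psi(\lambda,y)\big)$. Exactly as in the proof of Lemma \ref{vec-func}, each $\phi_\lambda$ lies in $Diff(\mathbb{R}^2,\mu)$ and realizes the $\mu$-conjugacy between the corresponding planar fields; the only additional point is that $\phi_\lambda$ depends smoothly on $\lambda$, which is immediate since $\psi$ and $H$ are smooth in $\lambda$. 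Together with the identification of the base change $H$, this shows that every $\mu$-preserving deformation of $X_k$ is induced from $F$, i.e. that $F$ is versal.

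The step I expect to require the most care is the second one: pinning down the correct notion of versality for $\mu$-preserving planar deformations and verifying that the dictionary of Lemmas \ref{lemma-bijection} and \ref{vec-func} is genuinely functorial in the deformation parameter — that every deformation remaining inside $\mathcal{X}(\mathbb{R}^2,\mu)$ arises from a univariate deformation, and that the diffeomorphisms and base maps produced on the one-dimensional side lift to smooth families in $Diff(\mathbb{R}^2,\mu)$. Once this correspondence is set up, the versality itself is inherited verbatim from Theorem \ref{theo-Kostov}, and no further analytic estimates are needed.
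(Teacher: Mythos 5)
Your proposal is correct and rests on the same two pillars as the paper's own proof --- the dictionary $f\leftrightarrow X_f$ of Lemma \ref{lemma-bijection} together with Kostov's one--dimensional versality theorem \ref{theo-Kostov} --- but the way you carry out the transfer is genuinely different and, in fact, more complete. The paper argues topologically: it observes that the map $X_f\mapsto\frac{1}{1+x}\mu(X_f)$ is continuous, asserts that the image of a versal unfolding of $f\frac{\partial}{\partial y}$ is an open neighbourhood of $f\frac{\partial}{\partial y}$, and concludes that the preimage of that neighbourhood is a versal unfolding of $X_f$. That is a quick heuristic (a finite--parameter versal family does not literally have open image in the space of germs; what is true is that every nearby germ is \emph{induced} from it), whereas you verify the definition of versality directly: you check by differentiation that $F$ is exactly $X_{f_\lambda}$ for Kostov's family $f_\lambda$, you note that Definition 4.1 for univariate functions is precisely the equivalence $g=\frac{1}{\psi'}f(\psi)$ that Lemma \ref{vec-func} shows corresponds to $\mu$--conjugacy, and you build the parametrized family $\phi_\lambda(x,y)=\bigl(\frac{1+x}{\partial_y\psi(\lambda,y)}-1,\psi(\lambda,y)\bigr)\in Diff(\mathbb{R}^2,\mu)$ out of the data $(H,\psi)$ that Kostov's theorem supplies, observing that smoothness in $\lambda$ is inherited. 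This buys a proof that actually closes the loop at the level of the definitions stated in the paper, at the cost of a slightly longer argument; the paper's version buys brevity at the cost of an informal openness claim. The one point you flag as delicate --- that every deformation of $X_k$ inside $\mathcal{X}(\mathbb{R}^2,\mu)$ comes from a smooth univariate deformation --- is indeed the right thing to check, and it follows from the explicit formula $g_\lambda=\frac{1}{1+x}\mu(X_{g_\lambda})$, so no gap remains.
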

\begin{proof}
Consider mapping:
\[
X_f\mapsto \frac{1}{1+x}\mu(X_f)\ \ (\star)
\]
introduced in \ref{lemma-bijection}. This mapping is continuous, thus the preimage of an open neighbourhood of $f$ is an open neighbourhood of $X_f$.

Since the image of a versal unfolding $F_f$ of $f(y)\frac{\partial}{\partial y}$ is an open neighbourhood of $f(y)\frac{\partial}{\partial y}$, the preimage of this neighbourhood, under mapping $(\star)$, is a versal unfolding of $X_f$. Versal unfoldings for vector fields of the line are given in \ref{theo-Kostov}; using the inverse of mapping ($\star$) we get the conclusion.
\end{proof}
We are now able to state results regarding bifurcations of vector fields belonging in $\mathcal{X}(\mathbb{R}^2,\mu)$. Since vector fields undergo no bifurcations in the neighbourhood of regular points and, as we saw above, hyperbolic singularities do not exist, we proceed to the analysis of the first degenerate case.
\begin{proposition}
There exists a codimension $2$ surface in $\mathcal{X}(\mathbb{R}^2,\mu)$, every member of which is $\mu$--conjugate to the vector field:
\[X_2=-(1+x)(2ay+3\lambda_2y^2)\frac{\partial}{\partial x}+(ay^2+\lambda_2y^3)\frac{\partial}{\partial y}.\]
A versal unfolding of this field is given by:
\[F_2=-(1+x)\Big[2ay+3\lambda_2y^2\Big]\frac{\partial}{\partial x}+(ay^2+\lambda_1+\lambda_2y^3)\frac{\partial }{\partial y}.\]
\end{proposition}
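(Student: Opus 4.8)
The plan is to obtain both assertions as the $k=2$ specialization of the results already established, moving back and forth through the bijection of Lemma \ref{lemma-bijection}.

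First I would pin down the codimension-$2$ surface. Under the correspondence $X\leftrightarrow f$ of Lemma \ref{lemma-bijection}, the germ $X_f=-(1+x)f'(y)\frac{\partial}{\partial x}+f(y)\frac{\partial}{\partial y}$ vanishes at the origin precisely when $f(0)=f'(0)=0$, while the Jacobian computed in part (i) of the preceding classification theorem shows that the $1$--jet $j^1X(0)$ is nonzero exactly when $f''(0)\neq 0$. Hence the degenerate singularities with $k=2$ correspond to the function germs satisfying the two independent equations $f(0)=0$, $f'(0)=0$ together with the open condition $f''(0)\neq 0$. These two equations cut out a surface of codimension $2$; I would make this precise by passing to a finite jet space $J^r(\mathbb{R},\mathbb{R})$, $r\geq 2$, in which $f(0)=0$ and $f'(0)=0$ define a linear subspace of codimension $2$, the remaining condition $f''(0)\neq 0$ being open.

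Second, I would establish $\mu$--conjugacy to $X_2$. Any such $f$ has $j^2f\neq 0$ beginning in degree $2$, so Theorem \ref{Theo-capeiro-normal-forms}, applied as dictated by the Remark following it (conjugating diffeomorphism tangent to the identity, normal form $ay^2+dy^3$ with $a\neq 0$), conjugates $f$ to $ay^2+dy^3$. Writing $d=\lambda_2$ and transporting the result through the bijection shows that $X$ is $\mu$--conjugate to $X_2$. This is nothing more than part (iii) of the classification theorem evaluated at $k=2$, so it requires only recording the specialization rather than a fresh argument.

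Third, I would read the versal unfolding off Theorem \ref{theo-unfoldings} at $k=2$. The sole point needing care is the bookkeeping of the two sums: in the $\frac{\partial}{\partial x}$--component the single summand ($i=1$) carries the coefficient $(k-1-i)=0$ and drops out, whereas in the $\frac{\partial}{\partial y}$--component the summand ($i=1$) equals $\lambda_1 y^{0}=\lambda_1$, supplying the constant term. Collecting terms reproduces $F_2$ exactly, and versality is inherited directly from Theorem \ref{theo-unfoldings}. The statement is thus essentially a corollary of the general theorems, and I anticipate no genuine obstacle in the conjugacy or unfolding steps; the only delicate point is the precise meaning of the phrase ``codimension $2$ surface'' inside the infinite-dimensional space $\mathcal{X}(\mathbb{R}^2,\mu)$, which the jet-space description above is meant to settle, noting in addition that the defining conditions are invariant under $\mu$--conjugacy so that the codimension is well defined.
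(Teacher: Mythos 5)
Your proposal is correct and follows essentially the same route as the paper, which simply specializes Theorem \ref{theo-unfoldings} (and the classification theorem) to $k=2$; your bookkeeping of the two sums and the vanishing of the $i=1$ term in the $\frac{\partial}{\partial x}$--component is exactly what is needed. The only cosmetic difference is that the paper counts the codimension as the number of unfolding parameters, whereas you count the defining jet conditions $f(0)=f'(0)=0$; both give $2$ and are consistent.
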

\begin{proof}
The proof is, of course, a consequence of \ref{theo-unfoldings}. The codimension of the surface is equal to the number of the parameters of the unfolding. 
\end{proof} 

In figure 1 we see the face portrait of the vector field $X_2$, for characteristic values of parameters $a,\lambda_2$. Apart from the x--axis, which consists entirely of singularities, there exists also a saddle point (for $a\neq 0$), which lies on the $x=-1$ line, on which the Martinet form descents to the form $\pm zdz$.
\begin{figure}[h]
\begin{center}
\includegraphics[scale=0.4]{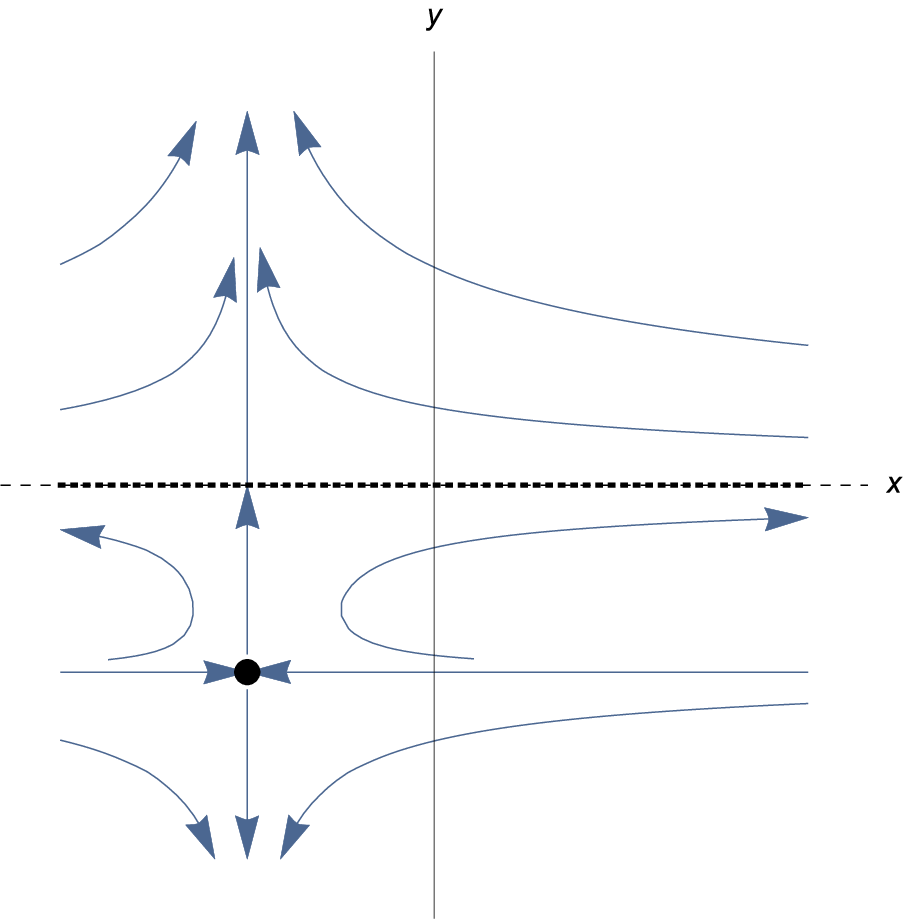}
\includegraphics[scale=0.4]{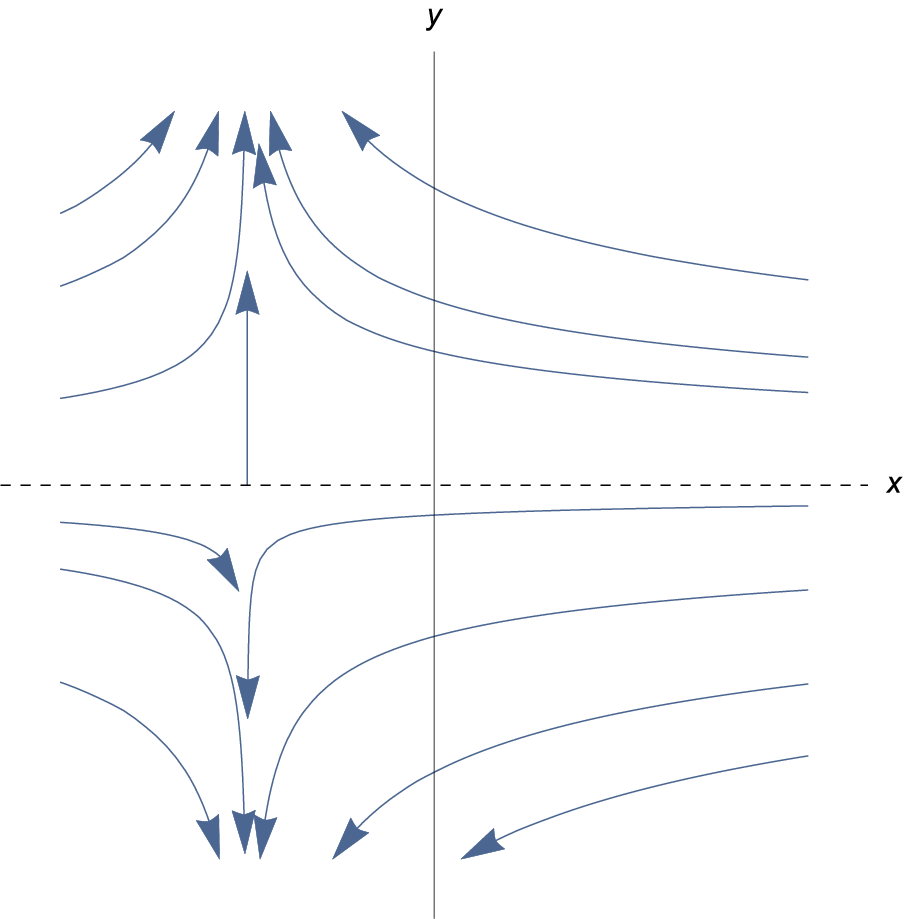}
\includegraphics[scale=0.4]{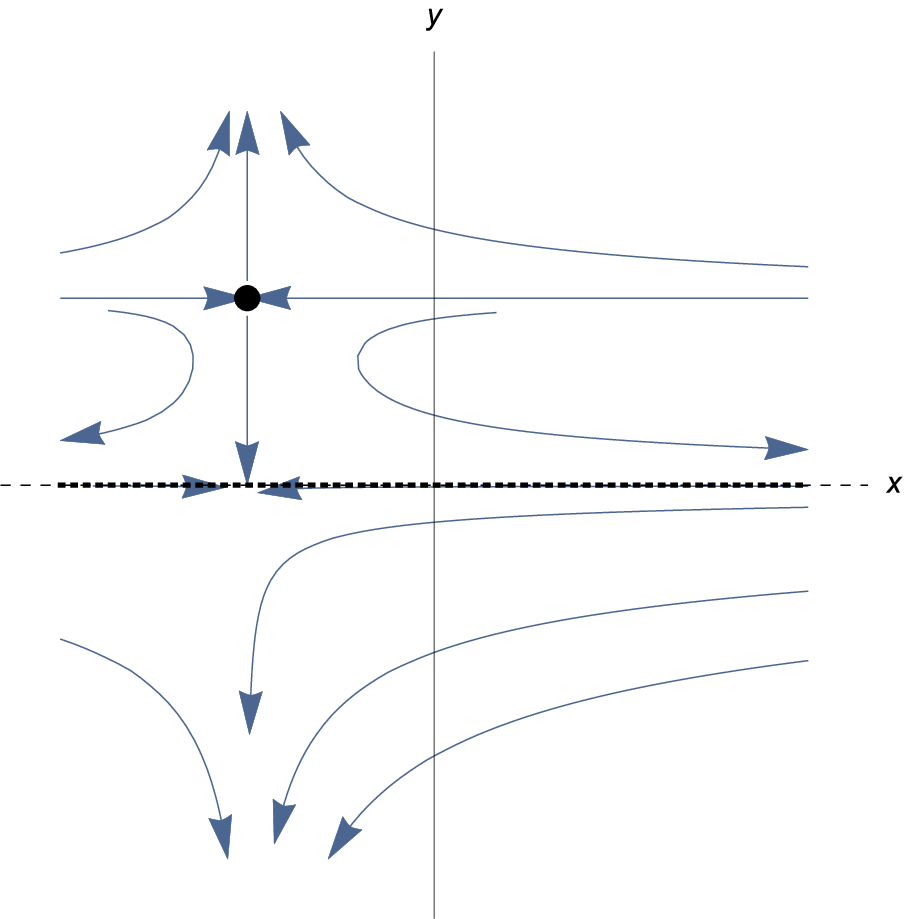}
\end{center}
\caption{Phase portraits for the $X_2$ vector field. Parameter values: $a=1,\lambda_2=1$ (left), $a=0,\lambda_2=1$ (center), $a=-1,\lambda_2=1$ (right). The dotted x--axis consists entirely of fixed points. As $a$ decreases from positive to negative values, a saddle singularity merges with the fixed points of the x--axis and then reappears.}
\end{figure}

For the unfolding $F_2$, the phase portrait is entirely different. For $\lambda_1\neq 0$, the origin is no longer an equilibrium. On the invariant $x=-1$ line there may no exist one or three equilibria, according to the parameter values. Characteristic phase portraits are shown in figure 2.
\begin{figure}[h]
\begin{center}
\includegraphics[scale=0.5]{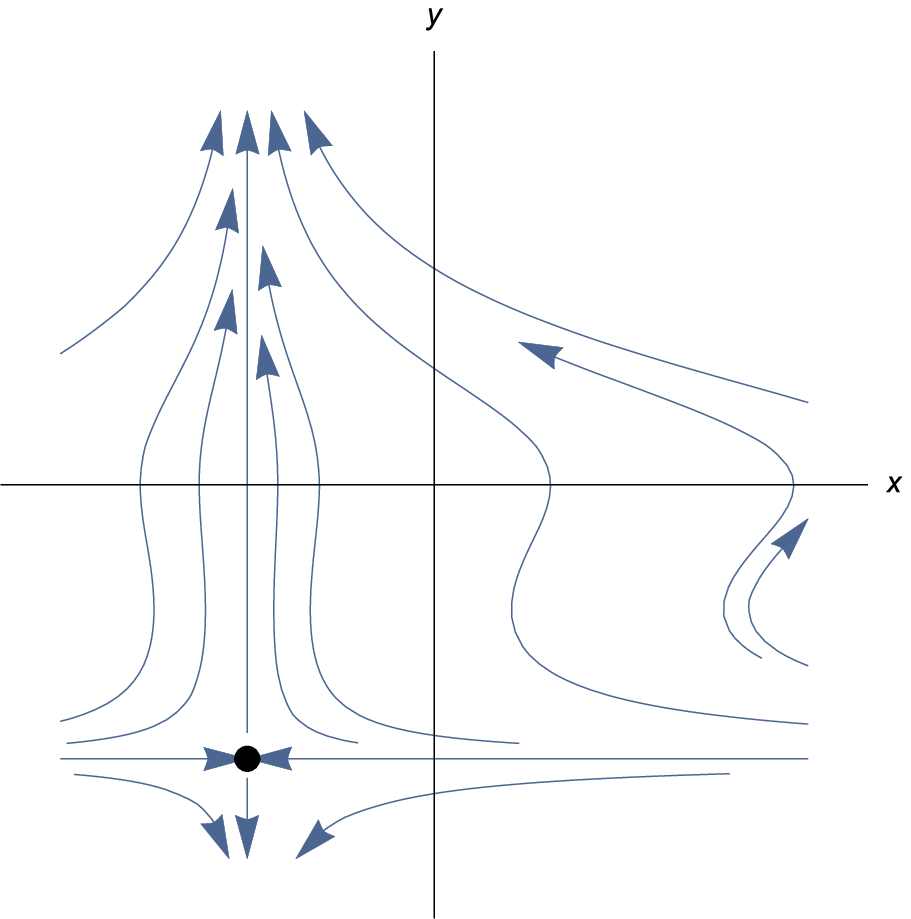}
\includegraphics[scale=0.5]{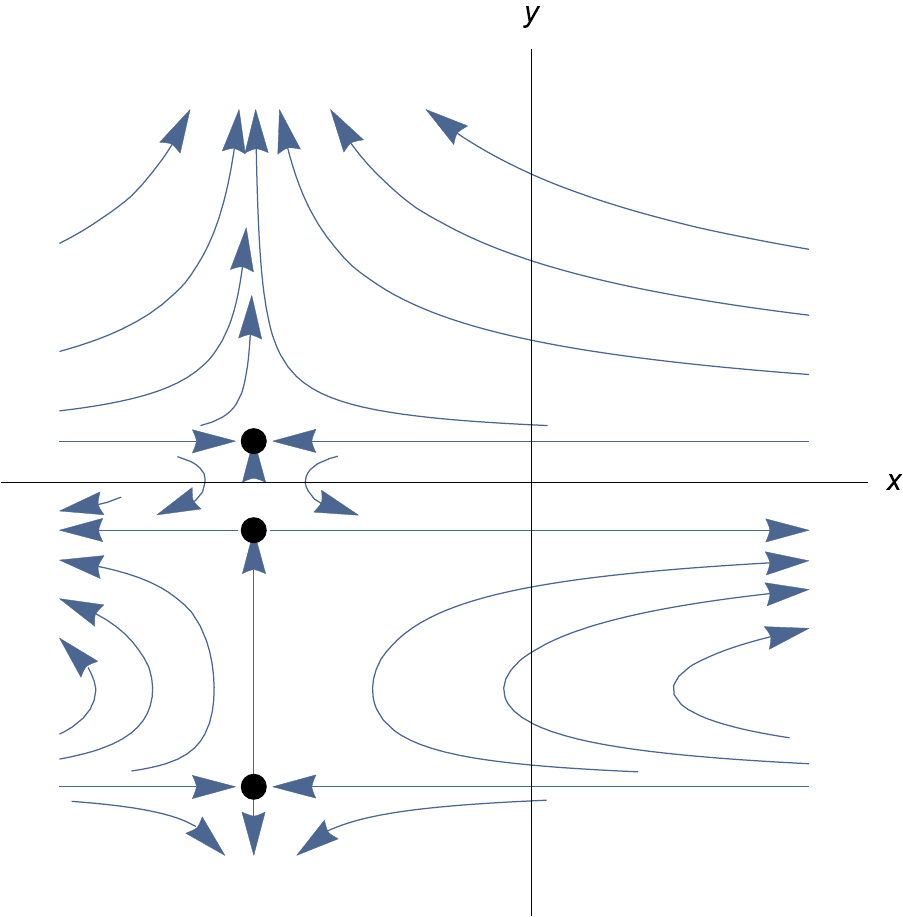}
\end{center}
\caption{Phase portraits for the $F_2$ vector field. Parameter values: $a=\lambda_1=\lambda_2=1$ (left) and $a=\lambda_2=1,\ \lambda_1=-0.02$ (right). Equilibria exist only on the, invariant, $x=-1$ line.}
\end{figure} 

Analogously, one can study all fields listed in \ref{theo-unfoldings}.
 \section{Conclusions}
 We have studied vector fields of $\mathbb{R}^3$ which preserve the Martinet $1$--form $\alpha$. Their third component vanishes identically; thus we turned our attention to vector fields of the plane preserving the form $\mu$. We gave a complete list of their local normal forms and studied their bifurcations, by constructing versal unfoldings of these normal forms. These results are connected with other studies, concerning vector fields preserving special structures (\cite{Kourliouros1,Kourliouros2,Anastassiou}).
 
An analogous classification for diffeomorphisms of the plane preserving the form $\mu$, and, thus, of $3$--d diffeomorphisms preserving the form of Martinet, cannot be given, since, as implied by Lemma $2$, such a diffeomorphism is not finitely determined (except, of course, from the identity transformation).

The local, and global, study of dynamical systems preserving specific $1$--forms is, of course, far from complete. In a next publication, we hope to further comment on these issues.     

\section*{Conflict of interest}
The authors declare that they have no
conflicts of interest.

\end{document}